\newcommand{\chebnorm}[1]{\norm{#1}_\infty}
\newcommand{\cnnorm}[2]{\chebnorm{{#1}^{(#2)}}}
\newcommand{\I}{[-1, 1]}
\newcommand{\Cn}{C^{\,n + 1}}
\newcommand{\Cno}{\Cn(\I)}
\newcommand{\Cm}{C^{\,m}}
\newcommand{\chebpoly}{T_{n+1}}
\title[Degree-$(n+1)$ poly. are difficult to approximate]%
{The degree-$(n+1)$ polynomials are the most difficult $C^{\,n + 1}$ functions to uniformly approximate with degree-$n$ polynomials.}
\author{Patrick Kidger}
\address{Mathematical Institute, University of Oxford, Andrew Wiles Building, Radcliffe Observatory Quarter (550), Woodstock Road, Oxford, OX2 6GG, UK}
\email{kidger@maths.ox.ac.uk}
\date{}
\subjclass[2010]{Primary 41A10, Secondary 41A05, 41A17}
\keywords{approximation, polynomials}
\thanks{This work was supported by the Engineering and Physical Sciences Research Council [EP/L015811/1]. The proof of Lemma \ref{lemma1} was slightly simplified by an anonymous commenter.}
\begin{document}

\begin{abstract}
There exist well-known tight bounds on the error between a function $f \in C^{\,n + 1}([-1, 1])$ and its best polynomial approximation of degree $n$. We show that the error meets these bounds when and only when $f$ is a polynomial of degree $n + 1$.
\end{abstract}

\maketitle

\begin{notation*}
Let $\Pi_n$ denote the set of polynomials of degree less than or equal to $n$.
\end{notation*}

It is a well known fact \cite{bernstein, phillips, elliottphillips, lewanowicz} that for all $f \in \Cno$ that
\begin{equation}\label{standard-bound}
\inf_{p \in \Pi_n} \chebnorm{f - p} \leq \frac{\cnnorm{f}{n+1}}{2^n (n+1)!}.
\end{equation}
(Note that this bound is better than that given by a na{\"i}ve Taylor series expansion.) When do we get equality? The purpose of this note is to show the following result.

\begin{thm*}
Let $n \in \naturals$. Let
\begin{equation*}
H_{n + 1} = \set{f \in \Cno}{\inf_{p \in \Pi_n} \chebnorm{f - p} = \frac{\cnnorm{f}{n+1}}{2^n (n+1)!}}.
\end{equation*}
Then $H_{n + 1} = \Pi_{n+1}$.
\end{thm*}

We highlight three particular implications of this result.

\begin{rem}
Every polynomial saturates an inequality of the form of equation \eqref{standard-bound}, for suitable $n$, whilst every (sufficiently differentiable) nonpolynomial fails to saturate any of them.
\end{rem}

\begin{rem}\label{rem-one}
Consider all target functions $f \in \Cn$ with fixed $\Cn$ seminorm, and thus by equation \eqref{standard-bound} of fixed maximum error. Then the Theorem shows that those $f$ which are also in $\Pi_{n+1}$ are precisely the $f$ which are worst approximated by elements of $\Pi_n$; hence the title of this note.
\end{rem}

\begin{rem}
It is typical to treat $\left(\Pi_n\right)_{n \in \naturals}$ as a nested sequence of improving approximations to the space of smooth functions. Then the Theorem implies that at every step the previous approximation scheme has been greedily improved by including those functions which it previously found most difficult to approximate.
\end{rem}

\begin{lem}\label{lemma1}
Let $m \in \naturals$. Let $z_0 < z_1 < \cdots < z_m$ be distinct points in $\reals$. Suppose $h$ is $m$ times differentiable in $[z_0, z_m]$, with $h(z_i) = 0$ for all $i$. Suppose also that $h^{(m)} \leq 0$ (or equivalently $h^{(m)} \geq 0$). Then $h \equiv 0$ in $[z_0, z_m]$.
\end{lem}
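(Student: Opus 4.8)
The plan is to analyse the derivatives $h^{(m)}, h^{(m-1)}, \dots, h^{(1)}, h = h^{(0)}$ in turn, keeping track of the global shape of each on $[z_0, z_m]$. Two ingredients are used throughout: the sign hypothesis $h^{(m)}\le 0$, and the consequence of Rolle's theorem that, since $h$ has the $m+1$ distinct zeros $z_0<\dots<z_m$, the derivative $h^{(k)}$ has at least $m+1-k$ zeros in $[z_0,z_m]$; in particular $h^{(m-1)}$ has at least two. The only elementary fact needed about monotone functions is: if a continuous function is non-increasing on an interval and vanishes at a point $c$, then it is $\ge 0$ to the left of $c$ and $\le 0$ to the right, and if it vanishes at two distinct points it vanishes identically on the sub-interval between them. (For $m=1$ this fact, applied to $h$ itself, which is then monotone with $h(z_0)=h(z_m)=0$, already gives the lemma.)

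Now I descend in the order of differentiation. Since $h^{(m)}\le 0$, the function $h^{(m-1)}$ is non-increasing; having at least two zeros it therefore vanishes on the non-degenerate interval $[a,b]$ spanned by its extreme zeros, and is $\ge 0$ on $[z_0,a]$ and $\le 0$ on $[b,z_m]$. Being an antiderivative of a function of that shape, $h^{(m-2)}$ is non-decreasing on $[z_0,a]$, constant on $[a,b]$, and non-increasing on $[b,z_m]$, so it attains its maximum on the plateau $[a,b]$. The crucial point is that this maximum equals $0$: Rolle's theorem, together with the fact that $a$ and $b$ are the extreme zeros of $h^{(m-1)}$, forces one of the (at least three) zeros of $h^{(m-2)}$ to lie strictly inside $(a,b)$, where $h^{(m-2)}$ is constant; hence $h^{(m-2)}\le 0$ on all of $[z_0,z_m]$, and $h^{(m-2)}$ still vanishes on a non-degenerate interval. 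Feeding this back: $h^{(m-3)}$ is again non-increasing (its derivative is $\le 0$) and, having at least two zeros, vanishes between its extreme zeros with the same $\ge 0$ / $\le 0$ behaviour on either side; then $h^{(m-4)}$ is once more of the non-decreasing/constant/non-increasing type with maximum $0$; and so on. Thus after an even number of antidifferentiations the current derivative is $\le 0$ throughout $[z_0,z_m]$, and after an odd number it is non-increasing throughout $[z_0,z_m]$.

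Carried down to $h=h^{(0)}$: if $m$ is odd then $h$ is non-increasing on $[z_0,z_m]$, and since $h(z_0)=h(z_m)=0$ monotonicity forces $h\equiv 0$; if $m$ is even then $h$ is non-decreasing, then constant, then non-increasing on $[z_0,z_m]$ with maximum value $0$, so $h(z_0)=0$ being the maximum forces $h\equiv 0$ on the initial non-decreasing stretch, symmetrically $h\equiv 0$ on the final stretch, and $h$ already vanishes in between. In every case $h\equiv 0$ on $[z_0,z_m]$.

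I expect the step needing the most care to be the passage from ``non-increasing with a zero plateau'' to ``one maximum, equal to $0$'' — that is, checking that the plateau really sits at height zero. This is exactly where one must combine the zero count from Rolle's theorem with the interlacing of the zeros of a function and its derivative, and one has to make sure that at each such step the relevant derivative still has at least three zeros so that the interlacing genuinely places a zero in the interior of the plateau. Setting up cleanly the alternating bookkeeping — which derivatives are known to be $\le 0$ and which only non-increasing — is the other place to be careful.
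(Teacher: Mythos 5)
Your proof is correct, but it is organised quite differently from the paper's. The paper argues by induction on $m$: Rolle gives $m$ zeros of $h'$, the inductive hypothesis applied to $h'$ (whose $(m-1)$-th derivative is $h^{(m)} \leq 0$) forces $h' \equiv 0$ on a middle interval, hence $h \equiv 0$ there, and the two outer intervals $[z_0, z_1]$ and $[z_{m-1}, z_m]$ are then dispatched by a second application of the inductive hypothesis using the monotonicity of $h^{(m-1)}$. You instead run a single finite descent through $h^{(m)}, h^{(m-1)}, \ldots, h$ with an alternating invariant (non-increasing at odd depth; $\leq 0$ with a zero plateau at even depth), and the step you rightly identified as delicate --- that each plateau sits at height zero --- is handled correctly: with at least three zeros $w_1 < w_2 < w_3$ of the even-level derivative (available since iterated Rolle gives $h^{(k)}$ at least $m+1-k \geq 3$ zeros for $k \leq m-2$), Rolle places zeros of the next derivative in $(w_1, w_2)$ and $(w_2, w_3)$, both of which lie between its extreme zeros $a$ and $b$, so $w_2 \in (a,b)$ and the constant value on the plateau is $0$. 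The parity bookkeeping at the end ($m$ odd: $h$ monotone between the endpoint zeros; $m$ even: maximum $0$ attained at both endpoints) closes the argument in all cases. What the paper's induction buys is brevity and the absence of any sign/shape bookkeeping, since the full-strength hypothesis is simply reapplied to $h'$ and to $h$ on subintervals; what your descent buys is a non-recursive, explicitly quantitative picture of how the sign of $h^{(m)}$ propagates down through the derivatives, at the cost of the interlacing and parity details you had to track.
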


\begin{proof}
By induction. First consider $m = 1$. Then $h' \leq 0$ implies $h$ is nonincreasing, so $0 = h(z_0) \geq h(z) \geq h(z_1) = 0$ for all $z \in [z_0, z_1]$.

Now suppose the statement is true for $m - 1$, and consider the problem for $m$. Rolle's theorem implies for $i \in \{0, \ldots, m - 1\}$ that there exists $\eta_i \in (z_i, z_{i + 1})$ such that $h'(\eta_i) = 0$. By the inductive hypothesis $h' \equiv 0$ in $[\eta_0, \eta_{m-1}]$. Hence $h$ is constant there. So $h \equiv 0$ in $[\eta_0, \eta_{m-1}] \supseteq (\eta_0, \eta_{m-1}) \supseteq [z_1, z_{m-1}]$, and furthermore $h^{(m-1)} \equiv 0$ in $[z_1, z_{m-1}]$.

Now $h^{(m)} \leq 0$ implies $h^{(m - 1)}$ is nonincreasing, so $h^{(m - 1)} \geq 0$ in $[z_0, z_1]$ and $h^{(m - 1)} \leq 0$ in $[z_{m-1}, z_m]$. By the inductive hypothesis, $h \equiv 0$ in $[z_0, z_1]$ and $[z_{m-1}, z_m]$, and thus $h \equiv 0$ in $[z_0, z_m]$.
\end{proof}


\begin{lem}\label{lemma2}
Let $m \in \naturals$. Let $z_0 < z_1 < \ldots < z_m$ be distinct points in $\reals$. Let $\beta_0, \ldots, \beta_m \in \reals$. Let
\begin{equation*}
G = \set{g \in \Cm([z_0, z_m])}{g(z_i) = \beta_i \text{ for all } i}.
\end{equation*}
Let $p$ be the unique element of $G \cap \Pi_m$. Then $p$ is also the unique element of $G$ satisfying
\begin{equation*}
\cnnorm{p}{m} = \inf_{g \in G} \cnnorm{g}{m}.
\end{equation*}
\end{lem}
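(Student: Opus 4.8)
The plan is to reduce the whole statement to Lemma \ref{lemma1} by comparing an arbitrary competitor with the Lagrange interpolating polynomial. Recall that $G \cap \Pi_m$ is exactly $\{p\}$, where $p$ is the degree-$\leq m$ polynomial interpolating the data $(z_i, \beta_i)_{i = 0}^m$; this is standard and supplies the well-definedness already asserted. The key structural fact I would use is that, since $p \in \Pi_m$, the derivative $p^{(m)}$ is a \emph{constant}, say $c$ (concretely $m!$ times the leading coefficient of $p$, equivalently $m!$ times the divided difference of the $\beta_i$, though the value is irrelevant). Hence $\cnnorm{p}{m} = |c|$.

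First I would establish that $|c|$ is a lower bound for $\cnnorm{g}{m}$ over $g \in G$, so that $p$ is a minimiser and the infimum equals $|c|$. Given $g \in G$, put $h = g - p \in \Cm([z_0, z_m])$; then $h(z_i) = 0$ for all $i$, so $h$ has at least $m + 1$ distinct zeros in $[z_0, z_m]$, and $m$ applications of Rolle's theorem (as in the proof of Lemma \ref{lemma1}) yield a point $\xi \in (z_0, z_m)$ with $h^{(m)}(\xi) = 0$, i.e. $g^{(m)}(\xi) = c$. Therefore $\cnnorm{g}{m} \geq |g^{(m)}(\xi)| = |c| = \cnnorm{p}{m}$, as desired.

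Finally, for uniqueness, suppose $g \in G$ with $\cnnorm{g}{m} = |c|$, and again set $h = g - p$, which vanishes at each $z_i$. If $c \geq 0$ then $g^{(m)} \leq \cnnorm{g}{m} = c$ pointwise, so $h^{(m)} = g^{(m)} - c \leq 0$; if $c < 0$ then $g^{(m)} \geq -\cnnorm{g}{m} = c$, so $h^{(m)} \geq 0$. In either case Lemma \ref{lemma1} applies and forces $h \equiv 0$, i.e. $g = p$. I do not anticipate a real obstacle here: the only points needing care are noticing that the constancy of $p^{(m)}$ is what powers the lower bound, and that the equality hypothesis $\cnnorm{g}{m} = |c|$ pins down the sign of $h^{(m)}$ precisely so that Lemma \ref{lemma1} becomes applicable.
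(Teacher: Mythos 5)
Your proof is correct and takes essentially the same approach as the paper: compare a competitor $g$ with the interpolant $p$, use the constancy of $p^{(m)}$ to fix the sign of $(g-p)^{(m)}$, and invoke Lemma \ref{lemma1}. The only difference is cosmetic: your separate Rolle-based lower bound is redundant, since your equality-case argument works verbatim under the weaker hypothesis $\cnnorm{g}{m} \leq \cnnorm{p}{m}$, which is how the paper handles attainment and uniqueness in a single application of Lemma \ref{lemma1}.
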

\begin{proof}
By considering $-p$ and $-\beta_i$ if necessary, also assume without loss of generality that $p^{(m)} \geq 0$, recalling that $p^{(m)}$ is constant. Let $g \in G$ be such that $\cnnorm{g}{m} \leq \cnnorm{p}{m}$, implying $g^{(m)} \leq p^{(m)}$, and let $h = g - p$. Then $h \equiv 0$ by Lemma \ref{lemma1}.
\end{proof}
That is, given values for some $m + 1$ points, then the unique polynomial of degree $m$ passing through them is also the unique smallest $\Cm$ function passing through them.

\begin{prop}[\cite{bernstein, phillips, elliottphillips}]\label{prop-one}
Let $f \in \Cno$. Then
\begin{equation*}
\frac{\min\limits_{x \in [-1, 1]}\abs{f^{(n+1)}(x)}}{2^n (n+1)!} \leq \inf_{p \in \Pi_n} \chebnorm{f - p} \leq \frac{\cnnorm{f}{n+1}}{2^n (n+1)!}.
\end{equation*}
\end{prop}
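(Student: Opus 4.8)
The plan is to prove the two inequalities separately; both are classical, and both rest on the Chebyshev polynomial $\chebpoly$ and its elementary properties: $\chebnorm{\chebpoly} = 1$, its leading coefficient is $2^n$, it has $n + 1$ simple zeros $x_0, \ldots, x_n$ in $(-1,1)$, and it attains the values $\pm 1$ alternately at the $n + 2$ points $y_k = \cos\bigl(k\pi / (n+1)\bigr)$, $k = 0, \ldots, n + 1$, where $1 = y_0 > y_1 > \cdots > y_{n+1} = -1$.

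For the right-hand inequality I would take $p \in \Pi_n$ to be the Lagrange interpolant of $f$ at $x_0, \ldots, x_n$. Since $f \in \Cno$, the interpolation-error formula gives, for each $x \in \I$, a point $\xi$ (depending on $x$) in $(-1,1)$ with $f(x) - p(x) = \frac{f^{(n+1)}(\xi)}{(n+1)!}\prod_{i=0}^{n}(x - x_i)$. Because $\chebpoly$ has leading coefficient $2^n$ and vanishes exactly at the $x_i$, we have $\prod_{i=0}^{n}(x - x_i) = 2^{-n}\chebpoly(x)$, whose supremum norm is $2^{-n}$; taking absolute values and then the supremum over $x \in \I$ gives $\inf_{p \in \Pi_n}\chebnorm{f - p} \le \chebnorm{f - p} \le \frac{\cnnorm{f}{n+1}}{2^{n}(n+1)!}$.

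For the left-hand inequality I would work with the $(n+1)$-st divided difference $f[y_0, \ldots, y_{n+1}]$ of $f$ at the extrema of $\chebpoly$, written as $\sum_{k=0}^{n+1}\lambda_k\, f(y_k)$ with $\lambda_k = \bigl(\prod_{l \ne k}(y_k - y_l)\bigr)^{-1}$. Two facts are used. First, the $(n+1)$-st divided difference of any element of $\Pi_n$ vanishes, so for every $p \in \Pi_n$ one has $f[y_0, \ldots, y_{n+1}] = (f - p)[y_0, \ldots, y_{n+1}] = \sum_k \lambda_k\,(f - p)(y_k)$, whence $\bigl|f[y_0, \ldots, y_{n+1}]\bigr| \le \bigl(\sum_k|\lambda_k|\bigr)\chebnorm{f - p}$. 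Second, by the mean-value form of the divided difference there is $\xi \in (-1,1)$ with $f[y_0, \ldots, y_{n+1}] = \frac{f^{(n+1)}(\xi)}{(n+1)!}$, so the left-hand side is at least $\frac{1}{(n+1)!}\min_{x \in \I}\bigl|f^{(n+1)}(x)\bigr|$. Combining, and taking the infimum over $p \in \Pi_n$, gives $\inf_{p \in \Pi_n}\chebnorm{f - p} \ge \bigl(\sum_k|\lambda_k|\bigr)^{-1}\,\frac{\min_{x \in \I}|f^{(n+1)}(x)|}{(n+1)!}$, so the proof reduces to the identity $\sum_{k=0}^{n+1}|\lambda_k| = 2^{n}$.

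That identity is the step I expect to be the main obstacle — and it is also the reason one must anchor the argument at the extrema of $\chebpoly$ rather than, say, at the equioscillation points of the best approximation to $f$: for a generic node set $\sum_k|\lambda_k|$ strictly exceeds $2^{n}$, and the argument above would then yield a strictly weaker lower bound. To evaluate it, I would observe that, since $\chebpoly(y_k) = \cos(k\pi) = (-1)^k$, the polynomial $\chebpoly$ is precisely the degree-$(n+1)$ interpolant of the data $\{(y_k, (-1)^k)\}_{k=0}^{n+1}$; hence its leading coefficient, namely $2^{n}$, equals that interpolant's $(n+1)$-st divided difference $\sum_{k}(-1)^k\lambda_k$. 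Finally, because $y_0 > y_1 > \cdots > y_{n+1}$, the product $\prod_{l \ne k}(y_k - y_l)$ has exactly $k$ negative factors, so $\lambda_k$ has sign $(-1)^k$ and therefore $(-1)^k\lambda_k = |\lambda_k|$ for each $k$; thus $\sum_k|\lambda_k| = \sum_k(-1)^k\lambda_k = 2^{n}$, completing the argument.
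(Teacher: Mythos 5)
Your proof is correct. Note that the paper does not actually prove Proposition \ref{prop-one}; it is quoted from the cited literature, so there is no internal proof to compare against. Your right-hand inequality is exactly the route the paper uses implicitly in the proof of its Theorem: interpolation at the zeros of $\chebpoly$ (the paper's Proposition \ref{prop-two}) combined with $\chebnorm{\chebpoly} = 1$ and the fact that $\chebpoly$ has leading coefficient $2^n$, so that $\prod_i (x - x_i) = 2^{-n}\chebpoly(x)$. Your left-hand inequality is the classical argument of Phillips: bound the $(n+1)$-st divided difference at the extrema $y_k$ of $\chebpoly$ from above by $\bigl(\sum_k \abs{\lambda_k}\bigr)\chebnorm{f - p}$ (using that it annihilates $\Pi_n$), from below by $\min_{x \in \I}\abs{f^{(n+1)}(x)}/(n+1)!$ (mean-value form, valid since $f \in \Cno$), and then evaluate $\sum_k \abs{\lambda_k}$. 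Your evaluation is sound: since the $y_k$ are strictly decreasing, $\lambda_k$ has sign $(-1)^k$, so $\sum_k \abs{\lambda_k} = \sum_k (-1)^k \lambda_k$ is the $(n+1)$-st divided difference of the data $(-1)^k$, i.e.\ the leading coefficient of its unique degree-$(n+1)$ interpolant, which is $\chebpoly$ by $\chebpoly(y_k) = (-1)^k$, giving $2^n$. All steps check out, and your side observation that any other node set gives $\sum_k\abs{\lambda_k} > 2^n$ (hence a weaker bound) is also accurate, as one sees by applying your own inequality chain to the monic polynomial $x^{n+1}$.
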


\begin{rem}
Consider all target functions $f \in \Cno$ for which
\begin{equation*}
\min_{x \in [-1, 1]}\abs{f^{(n+1)}(x)}
\end{equation*}
is of some fixed value, and so by Proposition \ref{prop-one} of fixed \emph{minimum} error. Then it is an immediate consequence of Proposition \ref{prop-one} that those $f$ which are also in $\Pi_{n+1}$ are among the $f$ which are \emph{best} approximated by elements of $\Pi_n$, in direct contrast to Remark \ref{rem-one}, and indeed the title of this article.

This is of course because of the dependence on how the size of a $\Cn$ function was fixed. The notion of \emph{best} corresponds to the use of
$f \mapsto \min_{x \in [-1, 1]}\abs{f^{(n+1)}(x)}$
as a measure of the size of a $\Cn$ function, whilst \emph{worst} corresponds to the more typical $\Cn$ seminorm.
\end{rem}

\begin{notation*}
Let $\chebpoly$ denote the $(n + 1)$-th Chebyshev polynomial of the first kind. Let $x_0, \ldots x_n \in \I$ be the roots of $\chebpoly$, so that
\begin{equation*}
x_i = \cos(\frac{(2i + 1) \pi }{2n + 2}).
\end{equation*}
\end{notation*}

\begin{prop}[\cite{phillips}]\label{prop-two}
Let $f \in \Cno$, and let $q$ denote the degree-$n$ polynomial interpolating $f$ through $x_0, \ldots, x_n$. Then for all $x \in \I$ there exists $\zeta_x \in \I$ such that
\begin{equation*}
f(x) - q(x) = \frac{f^{(n + 1)}(\zeta_x) \chebpoly (x)}{2^n (n+1)!}.
\end{equation*}
\end{prop}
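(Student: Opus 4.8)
The plan is to run the classical Cauchy-type argument for the error of Lagrange interpolation, and then to identify the nodal polynomial with a rescaling of $\chebpoly$. Write $\omega(t) = \prod_{i=0}^{n}(t - x_i)$ for the monic polynomial of degree $n+1$ vanishing at the interpolation nodes. Since the $x_i$ are distinct the interpolant $q \in \Pi_n$ exists and is unique, and $f - q$ vanishes at each $x_i$.

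First I would dispose of the trivial case $x \in \{x_0, \dots, x_n\}$: there $f(x) - q(x) = 0 = \chebpoly(x)$, so the identity holds with any $\zeta_x \in \I$. So fix $x \in \I$ with $x \neq x_i$ for all $i$, so that $\omega(x) \neq 0$, and introduce the auxiliary function
\[
\phi(t) = f(t) - q(t) - \frac{f(x) - q(x)}{\omega(x)}\,\omega(t), \qquad t \in \I.
\]
Then $\phi \in \Cno$, and $\phi$ vanishes at the $n + 2$ distinct points $x_0, \dots, x_n, x$. Applying Rolle's theorem $n + 1$ times (the same generalised-Rolle mechanism that drives the proof of Lemma \ref{lemma1}) produces a point $\zeta_x$ lying strictly between the least and the greatest of these $n+2$ points, hence $\zeta_x \in \I$, at which $\phi^{(n+1)}(\zeta_x) = 0$.

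Next I would evaluate that derivative. Since $\deg q \leq n$ we have $q^{(n+1)} \equiv 0$, and since $\omega$ is monic of degree $n+1$ we have $\omega^{(n+1)} \equiv (n+1)!$, so
\[
0 = \phi^{(n+1)}(\zeta_x) = f^{(n+1)}(\zeta_x) - \frac{f(x) - q(x)}{\omega(x)}\,(n+1)!,
\]
which rearranges to $f(x) - q(x) = f^{(n+1)}(\zeta_x)\,\omega(x)/(n+1)!$. Finally I would recall that $\chebpoly$ has leading coefficient $2^n$, so $2^{-n}\chebpoly$ is monic of degree $n+1$ with exactly the roots $x_0, \dots, x_n$; by uniqueness of the monic polynomial with prescribed roots, $\omega = 2^{-n}\chebpoly$. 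Substituting $\omega(x) = 2^{-n}\chebpoly(x)$ yields the stated formula.

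I do not expect a genuine obstacle here: the argument is textbook. The only points deserving a line of care are that the $\zeta_x$ furnished by iterated Rolle really does lie in $\I$ (it lies between nodes, all of which are in $\I$, as is $x$); that $x$ must be excluded before forming $\phi$ but is covered by the trivial case; and the Chebyshev normalisation $\chebpoly(t) = 2^n t^{n+1} + \cdots$, which is precisely what converts the generic interpolation-error identity into the $\chebpoly$-weighted one in the statement.
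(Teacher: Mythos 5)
Your proof is correct. The paper itself offers no proof of this proposition --- it is stated as a known result with a citation to Phillips --- and your argument is precisely the standard one behind that citation: the Cauchy/Rolle auxiliary-function derivation of the Lagrange interpolation error, combined with the observation that $2^{-n}\chebpoly$ is the monic nodal polynomial of the Chebyshev points, including the correct handling of the trivial case $x \in \{x_0,\dots,x_n\}$.
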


\begin{proof}[Proof of Theorem]
The forward inclusion is straightforward; it follows immediately from Proposition \ref{prop-one} that $\Pi_{n+1} \subseteq H_{n + 1}$.

Now the reverse inclusion. Let $f \in H_{n + 1}$. If $f \in \Pi_n \subseteq \Pi_{n + 1}$ we are done, so assume $f \notin \Pi_n$. As $f \in \Cno$, then by Proposition \ref{prop-two},
\begin{equation}\label{some-eq-label}
\chebnorm{f - q} \leq \frac{\cnnorm{f}{n+1}}{2^n (n+1)!}.
\end{equation}

Then equation \eqref{some-eq-label} and the fact that $f \in H_{n + 1}$ give that
\begin{equation}\label{q-min}
\chebnorm{f - q} \leq \frac{\cnnorm{f}{n+1}}{2^n (n+1)!} = \inf_{p \in \Pi_n} \chebnorm{f - p} \leq \chebnorm{f - q}.
\end{equation}
That is, $q$ achieves the infimum; it is the minimax approximation.

That $q$ is the minimax approximation implies, by the Equioscillation Theorem \cite[Theorem 10.1]{trefethen}, that $\abs{f - q}$ achieves its maximum at some $n + 2$ distinct points $-1 \leq y_0 < y_1 < \ldots < y_{n+1} \leq 1$, for which
\begin{equation}\label{another-eq-label}
f(y_j) - q(y_j) = \sigma (-1)^j \chebnorm{f - q},
\end{equation}
where  $\sigma \in \{-1, 1\}$. Together with Proposition \ref{prop-two} and equation \eqref{q-min} this implies that
\begin{equation}
\frac{f^{(n + 1)}(\zeta_{y_j}) \chebpoly (y_j)}{2^n (n+1)!} = \sigma (-1)^j \frac{\cnnorm{f}{n+1}}{2^n (n+1)!}. \label{yj-max}
\end{equation}
So the $y_j$ are precisely the locations of the maxima and minima of $\chebpoly$ in $[-1, 1]$; in particular $y_0 = -1$ and $y_{n+1} = 1$.

Let $r = f -q$, and without loss of generality assume $f$ is normalised such that
\begin{equation}\label{eq:norm}
\cnnorm{f}{n+1} = 2^n (n + 1)!,
\end{equation}
which is possible as $f \notin \Pi_n$.

Assume also without loss of generality that $\sigma = -1$ in equation \eqref{yj-max}; if need be swap $f$ for $-f$.

Then $r$ has the following properties:
\begin{enumerate}[label=(\roman*)]
\item $r \in \Cno$, as both $f$ and $q$ are.

\item $\cnnorm{r}{n+1} = 2^n (n + 1)!$, by equation \eqref{eq:norm}, because $q$ is a degree-$n$ polynomial and so vanishes when differentiated $n +1$ times.

\item $r(x_i) = 0 = \chebpoly(x_i)$ for all $x_i$, by Proposition \ref{prop-two}, recalling that $x_i$ are the zeros of $\chebpoly$.

\item $r(y_j) = (-1)^{j + 1} = \chebpoly(y_j)$ for all $y_j$. The first equality follows by substituting equation \eqref{eq:norm} into equation \eqref{q-min}, and then substituting the result into equation \eqref{another-eq-label}. The second equality is because the $y_j$ are the locations of the maxima and minima of $\chebpoly$, which followed from equation \eqref{yj-max}.
\end{enumerate}

Let $\{z_0, \ldots, z_{n+1}\}$ be any $n + 2$ distinct points of $\{x_0, \ldots, x_n, y_0, \ldots, y_{n + 1}\}$, such that in particular $z_0 = y_0 = -1$ and $z_{n + 1} = y_{n + 1} = 1$. Let $\beta_i = r(z_i)$. Then by Lemma \ref{lemma2}, there exists a unique function $p \in \Cno$ mapping $z_i$ to $\beta_i$ with minimal $\cnnorm{p}{n + 1}$, and furthermore $p \in \Pi_{n + 1}$.

Two uniqueness results are now used to identify $p$ in two different ways.

First, we observe that $\chebpoly \in \Pi_{n+1}$ maps the $n+2$ points $z_i$ to $\beta_i$, by properties (iii) and (iv), and so $p = \chebpoly$ by uniqueness of polynomial interpolants. This also implies that
\begin{equation}\label{eq-label-blah}
\inf_{g \in G} \cnnorm{g}{n + 1} = \cnnorm{p}{n + 1} = \cnnorm{\chebpoly}{n + 1} = 2^n (n + 1)!,
\end{equation}
where $G$ is as in Lemma \ref{lemma2}.

Second, Lemma \ref{lemma2} also states that $p$ is the unique element of $G$ that attains $\inf_{g \in G} \cnnorm{g}{n + 1}$. But $r \in G$, by properties (i), (iii), (iv), and it attains the minimum
\begin{equation*}
\inf_{g \in G} \cnnorm{g}{n + 1} = 2^n (n + 1)! = \cnnorm{r}{n + 1},
\end{equation*}
by equation \eqref{eq-label-blah} and property (ii), and so in fact $p = r$ also.

Thus $r = \chebpoly$. And so $f = q + r \in \Pi_{n+1}$ as desired. (Noting also that the normalisation and changing of sign are valid in that the `original' $f$ must also belong to $\Pi_{n+1}$.)
\end{proof}

\end{document}